\pgfplotsset{compat=newest}
\pgfplotsset{
	cycle list/.define={my marks}{
		every mark/.append style={solid,fill=\pgfkeysvalueof{/pgfplots/mark list fill}},mark=*\\
		every mark/.append style={solid,fill=\pgfkeysvalueof{/pgfplots/mark list fill}},mark=square*\\
		every mark/.append style={solid,fill=\pgfkeysvalueof{/pgfplots/mark list fill}},mark=triangle*\\
		every mark/.append style={solid,fill=\pgfkeysvalueof{/pgfplots/mark list fill}},mark=diamond*\\
	},
}
\tikzset{class or interface/.style={%
		draw,%
		shape=rectangle split,%
		rectangle split parts=3,%
		rectangle split part align={center,left,left},%
		rectangle split part fill={#1!30!white,#1!20!white,#1!10!white},
		every two node part/.style={align=center, font=\ttfamily},
		every three node part/.style={align=left, font=\ttfamily}, 
		node distance=2
	},
	interface/.style={
		class or interface=blue!80!black,
		every one node part/.style={align=center, font=\ttfamily\itshape}
	},                       
	concrete class/.style={
		class or interface=green!70!black,
		every one node part/.style={align=center, font=\ttfamily\bfseries}
	}
}
\pgfplotsset{
	log x ticks with fixed point/.style={
		xticklabel={
			\pgfkeys{/pgf/fpu=true}
			\pgfmathparse{exp(\tick)}%
			\pgfmathprintnumber[fixed relative, precision=3]{\pgfmathresult}
			\pgfkeys{/pgf/fpu=false}
		}
	},
	log y ticks with fixed point/.style={
		yticklabel={
			\pgfkeys{/pgf/fpu=true}
			\pgfmathparse{exp(\tick)}%
			\pgfmathprintnumber[fixed relative, precision=3]{\pgfmathresult}
			\pgfkeys{/pgf/fpu=false}
		}
	}
}
\newcommand{%
	\tikzsetnextfilename{}%
	\input{.tikz}%
}[1]{%
	\tikzsetnextfilename{#1}%
	\input{#1.tikz}%
}
\def\arraystretch{0.75} 
\begin{document}
	
	\title{Two-level preconditioning for Ridge Regression\protect\thanks{This work was supported Research Foundation - Flanders (FWO, No. G079016).}}
	
	\author[1,2]{J. Tavernier*}
	
	\author[2]{J. Simm}
	
	\author[1]{K. Meerbergen}
	
	\author[2]{Y. Moreau}
	
	\authormark{J. Tavernier \textsc{et al}}

	\address[1]{\orgdiv{Departement of Computer Science, NUMA},  \orgname{KU Leuven}, \country{Belgium}}
	\address[2]{\orgdiv{Department of Electrical Engineering, ESAT - STADIUS}, \orgname{KU Leuven}, \country{Belgium}}

	\corres{*Joris Tavernier, \url{https://people.cs.kuleuven.be/~joris.tavernier/},  \email{Joris.Tavernier@kuleuven.be}.}
	
	
	\abstract[Summary]{Solving linear systems is often the computational bottleneck in real-life problems. Iterative solvers are the only option due to the complexity of direct algorithms or because the system matrix is not explicitly known. Here, we develop a two-level preconditioner for regularized least squares linear systems involving a feature or data matrix.  Variants of this linear system may appear in machine learning applications, such as ridge regression, logistic regression, support vector machines and Bayesian regression. We use clustering algorithms to create a coarser level that preserves the principal components of the covariance or Gram matrix. This coarser level approximates the dominant eigenvectors and is used to build a subspace preconditioner accelerating the Conjugate Gradient method. We observed speed-ups for artificial and real-life data. }
	
	\keywords{Preconditioning, Machine learning, Clustering, Ridge regression, Large-scale, Tikhonov regularization, Krylov subspace methods}
	
	
	\maketitle
	

	\renewcommand\algorithmiccomment[1]{\hfill{\color{green!40!black}$\blacktriangleright$ #1}}
	
	\maketitle

	\section{Introduction} 
	The computational bottleneck in Machine Learning applications is often solving a linear system. These systems are usually solved iteratively due to the complexity of direct algorithms. Here we focus on problems from machine learning with variations of a least squares linear system leading to the regularized normal equations \begin{equation}(X^TX+\beta I)\textbf{w}=X^T\textbf{b} \label{eq:xtx=xtb}\end{equation}
	with $\beta$ a given regularization parameter. The feature matrix $X \in \mathbb{R}^{N\times F}$ consists of $N$ data samples with $F$ features. This matrix is often high-dimensional and sparse. Hence, Krylov subspace methods are appealing, since they only require the matrix-vector multiplication and not the matrix itself. For arbitrary sparse feature matrices $X$, the matrix product $X^TX$ in (\ref{eq:xtx=xtb}) is likely to be much less sparse than $X$. We focus on underdeterminded systems with $N<F$, referred to as small sample problems. This results in ill-posed linear systems with a non trivial null space $\mathcal{N}(X^TX)\ne\emptyset$. 
	
	The number of iterations required to reach a certain tolerance for Krylov subspace methods depends on the spectrum of the matrix, more specifically the distribution of the eigenvalues  \cite{demmel1997applied,trefethen1997numerical}. The condition number of the matrix is often an adequate indication. The art of preconditioning aims at reducing the number of iterations by transforming the problem to an easier one. One simple approach is to divide the matrix by its main diagonal and is called Jacobi preconditioning \cite{demmel1997applied,vanderSluis1969Condition}. Diagonal or Jacobi preconditioning is often very efficient if the matrix is diagonally dominant. Other existing preconditioning techniques such as incomplete Cholesky factorization \cite{KERSHAW197843incomplete,Lin1999Incomplete} or incomplete LU factorization \cite{Saad1994ILUT} are useless since the matrix product in our linear system is often close to dense. It is possible that the covariance matrix or Gram matrix $X^TX$ follows some special structure for which the inverse is sparse. In this case, the Sparse Approximate Inverse \cite{Benzi1996Approximate} can be computed based on this sparse structure. However, in general the inverse of the covariance matrix is dense. 
	
	For the solution of partial differential equations (PDEs), geometric Multigrid \cite{briggs2000multigrid} and its algebraic extensions are often successful \cite{ruge1987algebraic,stuben2001review,vassilevski2008multilevel}. The crucial concept is that the eigenvectors that slow down the convergence of iterative methods, can be accurately represented on a coarser grid. The components of these eigenvectors are then eliminated on coarser grids \cite{brezina2005adaptive,brezina2006adaptive,maclachlan2006adaptive}. Currently, the most generic preconditioners are variants of bootstrap AMG \cite{brandt2011bootstrap,Ambra2019Adaptive}. Bootstrap AMG designs the hierarchy of grids such that certain set of slow converging eigenvectors are exactly interpolated from the coarser grids. Bootstrap additionally exploits the existing hierarchy of grids to improve this set of slow converging eigenvectors. In this way, the hierarchy of grids is recursively improved. 
	
	The multilevel idea was also developed for the Fredholm integral equations and the resulting ill-posed problems \cite{reichel2010cascadic}. The unregularized problem is solved by applying the Conjugate Gradient method (CG) for the normal equations on different levels. Regularization is applied by restricting the number of iterations on the different levels. In addition, principal eigenvectors can be used for preconditioning LSQR for ill-posed problems \cite{jacobsen2003subspace,paige1982lsqr}. For PDEs, multilevel preconditioning exploits the underlying idea that the operator is based on a discretized grid, often resulting in M-matrices. As for the Fredholm integral equations, the operator is based on a discretization of continuous functions on a grid. Data matrices however do not have this property.
	
	Randomized algorithms such as stochastic gradient descent are popular for large-scale optimization problems in data sciences \cite{StochasticRobbins,BottouSGD2010,NIPS2013_4937}. These randomized algorithms have been used effectively for least-squares solves of sparse, well-conditioned matrices \cite{Kaczmarz2013Zouzias}. For an overview of randomized iterative algorithms for linear systems, see \cite{Randomized2015Gower} and the references therein. 
		
	In machine learning, several linear systems of the form (\ref{eq:xtx=xtb}) may have to be solved for one application. Firstly, the regularization parameter $\beta$ in (\ref{eq:xtx=xtb}) is often unknown and several values are tried in a cross-validation process. Secondly, different right-hand sides are often available or the linear system to be solved is part of a sampling algorithm. This means that even high computational costs for the construction of the preconditioner is often negligible compared to the many solves. 
	
	For applications from data science, state-of-the-art multilevel methods can not be used in their standard form. As opposed to grid based operators in PDEs, the matrices consist of samples and features and a hierarchy has to be designed on the data level. The resulting linear operator is a product of matrices and often dense. This means that the operator itself is not always available due to memory restrictions. There is no natural way to design the level hierarchy and the system matrix may not be available to design the coarse level directly from the operator. We thus propose to build a multilevel preconditioner based on the Multigrid idea by clustering the features of the data matrix, resulting in coarser levels.  Figure \ref{fig:lvlhierarchy} shows an example for dense matrices with clustering for both rows and columns. This can be done such that the principal eigenvectors are represented on the coarser levels. In practice, it is however more efficient to directly project to the coarsest level and only use the finest and the coarsest level and design a two-level preconditioner. 

\begin{figure*}[ht]
	\tikzsetnextfilename{lvlhierarchy}
	\begin{subfigure}{\textwidth}
		\centering
		\resizebox{\linewidth}{!}{\begin{tikzpicture}
\node at (0,-2.5) {Level 0};
\node at (5,-2.5) {Level 1};
\node at (10,-2.5) {Level 2};
\draw[step=0.5,lightgray] (-2,-2) grid (2,2);
 \foreach \x in {0,...,8}
\foreach \y in {0,...,8}
{
	\fill (\x/2-2,\y/2-2) circle (2.5pt);
}

\draw[step=1,lightgray] (3,-2) grid (7,2);
\foreach \x in {0,...,4}
\foreach \y in {0,...,4}
{
	\fill (\x+3,\y-2) circle (2.5pt);
}

\draw[step=2,lightgray] (8,-2) grid (12,2);
\foreach \x in {0,...,2}
\foreach \y in {0,...,2}
{
	\fill (2*\x+8,2*\y-2) circle (2.5pt);
}
\end{tikzpicture}}
		\caption{Hierarchy of Grids}
		\label{fig:grid_levels}
	\end{subfigure}
	\begin{subfigure}{\textwidth}
		\centering
		\resizebox{\linewidth}{!}{\begin{tikzpicture}
\node at (-1,-2.3) {$X^T_0$};
\node at (2.2,-0.3) {$X_0$};
\node at (5.5,-0.3) {$X^T_1$};
\node at (7.5,0.7) {$X_1$};
\node at (9.3,0.7) {$X^T_2$};
\node at (10.5,1.2) {$X_2$};
\node at (0.5,2.3) {Level 0};
\node at (6.5,2.3) {Level 1};
\node at (10,2.3) {Level 2};
\draw[step=0.5] (-2,-2) grid (0,2);
\draw[step=0.5] (0.499,0) grid (4.5,2);
\draw[step=0.5] (4.99,0) grid (6,2);
\draw[step=0.5] (6.499,0.99) grid (8.5,2);
\draw[step=0.5] (8.99,0.99) grid (9.5,2);
\draw[step=0.5] (9.99,1.499) grid (11,2);
\end{tikzpicture}}
		\caption{Hierarchy of dense data matrices for $X^TX$}
		\label{fig:mat_levels}
	\end{subfigure}
	\caption{Examples of the level hierarchies in PDEs and Data. Figure \ref{fig:grid_levels} shows the hierarchy of grids for the discretization of a PDE. Figure \ref{fig:mat_levels} shows the hierarchy of levels for $X^tX$ using a data matrix $X$ and clustering both the rows and columns.}
	\label{fig:lvlhierarchy}
\end{figure*} 

	The paper is organized as follows. Section \ref{sect:problem} describes different algorithms from machine learning and identifies the linear systems to be solved. Next, Section \ref{sect:two-level} describes the two-level preconditioner. In Section \ref{sect:Experiments}, the numerical results are presented. Finally, conclusions are given in Section \ref{sect:conclusion}. 
	\section{Applications}\label{sect:problem}
	Data in machine learning consists of data samples and their features. Feature matrices can take any form, from dense or sparse and binary or real valued. The sample characteristics $\textbf{x}_i$ are collected in a feature matrix  $X\in \mathbb{R}^{N\times F}$ with $N$ the number of samples or data points and $F$ the number of features. Using the collected feature matrix, one tries to predict certain properties for new and unknown samples. These properties can be real valued and then the task at hand is regression or they can be limited to certain classes and then the task is classification. In this section, we will describe some techniques from machine learning where the linear system (\ref{eq:xtx=xtb}) is the main computational bottleneck, with $\beta$ the regularization parameter. The techniques described here are mainly intended as illustrations and a comprehensive understanding is not required for the remaining sections. These applications require several linear systems with different right hand sides and justify the cost of more computational intensive preconditioners such as the preconditioner proposed in Section \ref{sect:two-level}.  
	\subsection{Ridge Regression}
	Suppose we have more samples than features resulting in an overdetermined system $$\textbf{b}\label{eq:lm}=X\textbf{w}+\textbf{e}$$ with $\textbf{b}$ the property of interest for the samples \cite{bishop2006pattern}. The ridge regression solution is determined by minimizing the squared error on $\textbf{b}$ ($\textbf{e}=X\textbf{w}-\textbf{b}$). Adding regularization to the solution vector $\textbf{w}$, the optimization problem is given by  $$\min_{\textbf{w}}\textbf{e}^T\textbf{e}+\beta \textbf{w}^T\textbf{w}$$ and the solution is \begin{equation}
	\textbf{w}=(X^TX+\beta I)^{-1}X^T\textbf{b}. \label{eq:rr}
	\end{equation} This technique is known in the field of numerical linear algebra as Tikhonov regularization for linear systems. The value of the regularization parameter $\beta$ is unknown and several values are tried in a cross-validation process.  
	
	\subsection{Bayesian regression}
	In ordinary ridge regression \cite{bishop2006pattern,gelman2013bayesian} the residuals $\textbf{e}$  are assumed uncorrelated and normally distributed in \eqref{eq:lm}, e.g. $\textbf{e}\sim \mathcal{N}(\textbf{0},\tau^{-1}I_N)$ with $I_N$ the identity matrix of size $\mathbb{R}^{N\times N}$ and $\tau > 0$ the precision parameter. In Bayesian learning, the observations $\textbf{b}$ are considered probabilistic \begin{equation*}
	p(\textbf{b}|X,\textbf{w})=\prod_{i=1}^N \mathcal{N}(y_i|{\textbf{x}}_i{\boldsymbol{\textbf{w}}},\tau^{-1})
	\end{equation*} with $\textbf{x}_i\in \mathbb{R}^{1\times F}$ the features of sample $i$. The conditional posterior distribution of $\textbf{w}$ given $\tau,X,\beta$ and $\textbf{b}$ is then \begin{equation*}
	p(\textbf{w}|\tau,\textbf{b},X) \sim \mathcal{N}((X^TX+\beta I_F)^{-1}X^T\textbf{b},((X^TX+\beta I_F)\tau)^{-1})
	\end{equation*}
	with $\beta$ a regularization parameter. Computing the covariance matrix $(X^TX+\beta I_F)\tau$ is inconvenient and often computationally infeasible. Simm et al. \cite{Simm2017Macau} however describe a Gibbs sampler for Bayesian regression and using their noise injection trick, a sample of $\textbf{w}$ can be taken by solving \begin{equation}
	\Biggl({X}^T{X}+	\frac{\lambda}{\tau}I_F\Biggr){\boldsymbol{\textbf{w}}}= {X}^T(\textbf{y}+\textbf{e}_1)+\frac{\textbf{e}_2}{\tau} \label{eq:ni_xtx}
	\end{equation}
	with $\textbf{e}_1\sim \mathcal{N}(\textbf{0},\tau^{-1}I_N)$ and $\textbf{e}_2\sim \mathcal{N}(\textbf{0},\lambda I_F)$.  A zero mean and uncorrelated normal distribution is used as prior for $\textbf{w}$: \begin{equation*}
	p(\textbf{w},\lambda|\alpha,\beta)\sim \mathcal{N}(\textbf{w}|\textbf{0},\lambda^{-1}I_F)\mathcal{G}(\lambda|\alpha,\beta)
	\end{equation*}
	with $\mathcal{G}(\lambda|\alpha,\beta)$ the gamma distribution as conjugate prior for the precision parameter $\lambda>0$. The parameters $\lambda$ and $\tau$ are sampled and the fraction $\frac{\lambda}{\tau}$ thus changes each iteration. The noise injection trick was originally developed for Bayesian Probabilistic Matrix Factorization \cite{salakhutdinov2008bayesian} with side information as detailed in \cite{Simm2017Macau}.
	\subsection{Optimization by the Newton Method} For certain optimization problems $$ \min_{\textbf{w}}f(w) $$ the solution can be found by the Newton method which requires the inverse of the Hessian ($\nabla^2 f(\textbf{w}))$). The Newton approach takes, at iteration $k$, the step $\textbf{s}_k=-(\nabla^2 f(\textbf{w}_{k-1}))^{-1}	\nabla f(\textbf{w}_{k-1})$ and the iterative solution is updated as $\textbf{w}_{k}=\textbf{w}_{k-1}+\textbf{s}_k$ with $\nabla f(\textbf{w})$ the gradient. For larg\text{e-}scale data the inverse is not directly computed but a linear system is solved. Binary logistic regression \cite{hosmer2013applied} and L2 loss function Support Vector Machines \cite{Vapnik1999overview} for example can be solved by the Newton Method and the Hessians $$\nabla^2 f(\textbf{w})=X^TD(\textbf{w})X+\beta I$$ are variants of (\ref{eq:xtx=xtb}) with the system matrix a product and $D(\textbf{w})$ a diagonal matrix. More details can be found in the paper by Lin et al. \cite{Lin2008Trust} and the references therein. 
	
	\subsection{Linear discriminant analysis}
	Dimension reduction techniques such as Principal Component Analysis (PCA) or Linear discriminant analysis (LDA) try to reduce the number of variables used in further analysis such as classification \cite{bishop2006pattern}. LDA is a supervised method that finds a linear mapping that maximally separates the classes while minimizing the within class variance. Defining the total-class-scatter matrix as $S_T=X^TX$ and the between-class-scatter matrix as $S_B$ the LDA optimization problem is $$\max_{\textbf{w}\in\mathbb{R}^{N}} \frac{\textbf{w}^TS_B\textbf{w}}{\textbf{w}^TS_T\textbf{w}}$$ 
	and can be solved by finding the eigenvectors corresponding to the largest eigenvalues of $$S_B\textbf{w}_i=\lambda_i(S_T+\beta I)\textbf{w}_i,$$ with $\beta$ the regularization parameter. The low-rank and structure of the between-class-scatter matrix can be exploited and the main computational bottleneck is solving the linear system \begin{equation}
	(X^TX+\beta I)\textbf{w}=X^T\textbf{b} \label{eq:lda}
	\end{equation} with $\textbf{b}$ determined by the sample labels as detailed in \cite{cai2008srda}.    
	
	\section{Two-level preconditioner}	\label{sect:two-level}
	The general idea of two-level preconditioning is to precondition a linear system from the solution of a linear system with less variables. This smaller linear system is obtained by projecting the large linear system on a subspace. Geometric Multigrid is used for the solution of elliptic partial differential equations. The discrete problem is solved hierarchically by approximating the original discretized PDE by coarser discretizations of the PDE \cite{briggs2000multigrid}. The Multigrid principles were extended to Algebraic Multigrid using only the coefficients and sparsity pattern of the matrix instead of the geometry of the problem \cite{ruge1987algebraic,stuben2001review,vassilevski2008multilevel}.   
	\subsection{The idea of two-level iterations}
	The use of coarse grids for the solution of PDEs is motivated as follows. Many iterative schemes have poor convergence for PDEs. For a class of solvers, e.g. Gauss-Seidel iteration, the low frequency and near kernel components of the solution converge very slowly. These solvers are called smoothers because the remaining error of the solution is typically smooth. The coarse grids are used to remove the low frequency components of the error in a more efficient way. When the PDE is discretized on a coarse grid, the low frequency components of the solution on the fine grid and thus the error can be approximated well on to the coarse grid. Generally, a two-level solver consists of the following steps. A small number of iterations is performed on the fine level and a smooth error remains. The residual is mapped on the coarse level and this is called restriction. The restriction operator $R$ maps a (long) vector on the fine grid to a (short) vector on the coarse grid. The linear system on the coarse grid is solved exactly. The coarse solution is mapped back to the fine grid using a prolongation operator $P$. This uses interpolation to identify all the unknowns on the fine grid and leads to a solution on the fine grid that mainly has high frequency components in the error. These components can then be further reduced by additional smoothing steps. 
	
	For symmetric matrices, the restriction operator $R$ is defined as the transpose of the interpolation operator $P$. Using the Galerkin approach the coarse level is defined by $A_1=PA_0P^T$. Assuming one step of Richardson's iteration, with $\omega$ a nonnegative scalar, as smoother \cite{Saad2003Iterative}, the resulting iteration matrix is $$T=(I-\omega A_0)(I-PA_1^{-1}P^TA_0)(I-\omega A_0)$$ where the first factor is called post-smoothing, the third is pr\text{e-}smoothing and the middle term is the coars\text{e-}grid correction. A more detailed explanation of (algebraic) Multigrid can be found in  \cite{vassilevski2008multilevel} and the references therein.
	\subsection{Data matrices}
	We want to adopt the idea of approximating error components on the coarse level and prolonging the coarse solution to the fine level. For matrices resulting from PDEs, the effect of aggregating elements is well known. In our case the matrix product of interest is $X^TX$ with $X$ a data matrix and this matrix product defines the covariance matrix.
		
	The eigenvectors of the covariance matrix have a specific physical interpretation. A technique called Principal Component Analysis (PCA) finds the eigenvectors corresponding to the largest eigenvalues of the covariance matrix. PCA is in essence a projection that maximizes the total variance \cite{bishop2006pattern}. PCA actually finds the directions or principal components that projects the data on a subspace maximally preserving the variance of the data. For small sample size problems, the number of features is larger than the number of samples, resulting in a non trivial nullspace of $X^TX$. These eigenvectors are actually noise and contain no data information.
	
	A widely used technique in machine learning is clustering. These clustering algorithms find subsets of similar samples or features based on a distance or similarity measure. Ideally, this results in subsets with small variance within one specific subset. Clustered subsets of features can be used to create aggregate features for the coarse level, preserving highest variance in the data set. Since highest variance is preserved in the coarse level, the principal components can be approximated on the coarse level.
	
	Firstly, we propose to define an intuitive averaging restriction operator ($\tilde{P}^T$) based on the clusters and this operator will be improved later. Specifically, a coarse feature $\textbf{c}_\mathcal{S}$ consist of the average of the $n_\mathcal{S}$ features $X(:,i)$ in one cluster $\mathcal{S}$ and $i\in \mathcal{S}$: $$\textbf{c}_\mathcal{S}=1/n_\mathcal{S}\sum_{i\in\mathcal{S}} X(:,i).$$ The resulting restriction operator has the value $1/n_\mathcal{S}$ for each feature $i\in\mathcal{S}$ in the row corresponding to $\mathcal{S}$. Assuming that the matrix $X$ is ordered such that $X=[X_1, X_2, \dots, X_{F_C}]$ with $X_\mathcal{S}$ the features belonging to cluster $\mathcal{S}=1,...,F_C$ and $F_C$ the coarse feature dimension, the restriction operator is 
	\begin{equation}
		\tilde{P}^T=\begin{array}{cccccccccccc}	
		&\multicolumn{3}{c}{\overbrace{\hspace{50pt}}^{n_1}}&\multicolumn{3}{c}{\overbrace{\hspace{50pt}}^{n_2}}&\dots &\multicolumn{3}{c}{\overbrace{\hspace{55pt}}^{n_{F_C}}}& \\
		\ldelim[{5.3}{10pt}[] &\frac{1}{n_1} &\ldots &\frac{1}{n_1} & & & & & & & &\rdelim]{5.3}{10pt}[]  \\
		&  & & &\frac{1}{n_2} &\ldots  &\frac{1}{n_2} &  & & & &  \\
		&  & & & & & & \ddots& & & & \\
		& & & & & & & &\frac{1}{n_{F_C}} &\ldots &\frac{1}{n_{F_C}} & \\
		\end{array}
	\end{equation}

	Note that the product \begin{equation}
	\tilde{P}^T\tilde{P}=\begin{bmatrix}
	\frac{n_1}{n_1^2} & & & \\
	 &\frac{n_1}{n_2^2} & & \\
	 & &\ddots & \\
	 & & &\frac{n_{F_C}}{n_{F_C}^2} \\
	\end{bmatrix}
	\end{equation} and thus $\tilde{P}$ does not define a projection. The averaging interpolation operator can easily be adjusted such that the product $P^TP$ is the identity matrix by defining the coarse features as $$\textbf{c}_\mathcal{S}=1/\sqrt{n_\mathcal{S}}\sum_{i\in\mathcal{S}} X(:,i)$$ resulting in 	\begin{equation}
	P^T=\begin{array}{cccccccccccc}	
	&\multicolumn{3}{c}{\overbrace{\hspace{60pt}}^{n_1}}&\multicolumn{3}{c}{\overbrace{\hspace{60pt}}^{n_2}}&\dots &\multicolumn{3}{c}{\overbrace{\hspace{65pt}}^{n_{F_C}}}& \\
	\ldelim[{5.3}{10pt}[] &\frac{1}{\sqrt{n_1}} &\ldots &\frac{1}{\sqrt{n_1}} & & & & & & & &\rdelim]{5.3}{10pt}[]  \\
	&  & & &\frac{1}{\sqrt{n_2}} &\ldots  &\frac{1}{\sqrt{n_2}} &  & & & &  \\
	&  & & & & & & \ddots& & & & \\
	& & & & & & & &\frac{1}{\sqrt{n_{F_C}}} &\ldots &\frac{1}{\sqrt{n_{F_C}}} & \\
	\end{array} \label{eq:prolong}
	\end{equation} Using the Galerkin approach \cite{vassilevski2008multilevel} the coarse linear system becomes \begin{eqnarray*}
	P^T(X^TX+\beta I)P &=	P^TX^TXP+P^T\beta IP\\
	&= X_c^TX_c+\beta \underbrace{P^TP}_{=I}
\end{eqnarray*}
	with $X_c=XP$ the matrix with coarse features ${\textbf{c}_\mathcal{S}}$ and the regularization parameter on the coarse level is equal to the regularization parameter on the fine level.     
	
	With the coarse level defined, we can create our two-level preconditioner. In AMG the coarse grid correction aims to eliminate the near kernel eigenvectors while the smoothing takes care of the eigenvectors associated with the large eigenvalues. In our approach, the problem is that both smoothing and coarse correction eliminate the error components of the principal eigenvectors. Therefore, we do not apply classical smoothing and recommend to use CG iterations as pre-smoother since CG reduces the norm of the error, but the norm of some of the individual components might increase \cite{doi:10.1137/0730007}. CG is thus sometimes referred to as a rougher instead of a smoother.  
	
	Finally, the coarse size $F_C$ strongly influences the performance of the preconditioner. The spectrum can be divided in three parts: the noise components with small eigenvalues, the principal components represented in the coarse level and lastly the remaining components with eigenvalues in between the noise and the principal components. The coarse size determines the dimension of the principal components and the remaining eigenvectors with nonzero eigenvalues. Depending on the required tolerance on the solution, these remaining components could cause slow convergence and are only eliminated on the fine level. If this happens, there is a possibility that the residual lies in the nullspace of the restriction operator $P^T$ and the principal components are not present in the residual. Therefore, at least one smoothing iteration is required, since only applying the coarse correction would result in a zero solution and cause the fine level Krylov solver to break down.	  

	\subsection{The ideal case}
	In the ideal case, features within one cluster are identical. This means that the element values of the eigenvectors corresponding to the features within one cluster are equal. This places a condition on our approach and the clustering algorithms should collect features in clusters such that the element values, corresponding to the features of one cluster, of the eigenvectors with large eigenvalues do not vary significantly.  
	\begin{definition}[Ideal data set for clustering]\label{def:ideal}
		The ideal data set for clustering \newline $X=[X_1,X_2, ... , X_{F_C}]$ consists of sets of features $X_i\in\mathbb{R}^{N\times n_i}$ consisting of $n_i$ duplicate features $\textbf{x}_i$ for $i=1\dots F_C$.
	\end{definition}
	\begin{theorem}[Ideal clustering case]\label{theorem:ideal}
		Let $X$ satisfy Definition \ref{def:ideal}, the $F_C$ eigenvectors with nonzero eigenvalues of $X^TX$ are constant within the clusters. The prolongation operator $P$ defined in (\ref{eq:prolong}) defines a projection $PP^T$ for these eigenvectors and the eigenvectors of $X^TX$ with nonzero eigenvalues have a one to one correspondence with the eigenvectors of $P^TX^TXP$.
	\end{theorem}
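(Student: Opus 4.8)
The plan is to reduce everything to one structural identity that the ideal assumption supplies. Writing each block as $X_i = \textbf{x}_i\mathbf 1_{n_i}^T$, the block $(i,i)$ of $PP^T$ is $\tfrac1{n_i}\mathbf 1_{n_i}\mathbf 1_{n_i}^T$ and the off–diagonal blocks vanish, so right–multiplying $X$ by $PP^T$ replaces the columns of each cluster by their common value $\textbf{x}_i$; that is, $XPP^T=X$, equivalently
\[
X=X_c P^T,\qquad X_c:=XP,\qquad P^TP=I_{F_C}.
\]
From this I would record two elementary consequences that drive the whole argument: (a) $PP^T$ is symmetric and $(PP^T)^2=P(P^TP)P^T=PP^T$, so it is the orthogonal projector onto $\operatorname{range}(P)$, which is precisely the space of vectors that are constant on each cluster; and (b) $P^TX^TXP=P^TP\,X_c^TX_c\,P^TP=X_c^TX_c$, so the Galerkin coarse operator is exactly the Gram matrix of the coarse features, and $X^TX=P\,X_c^TX_c\,P^T$.

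For the first two assertions, take any eigenvector $\textbf{v}$ of $X^TX$ with $X^TX\textbf{v}=\lambda\textbf{v}$ and $\lambda\neq0$. Then $\textbf{v}=\lambda^{-1}X^TX\textbf{v}\in\operatorname{range}(X^TX)=\operatorname{range}(P\,X_c^TX_c\,P^T)\subseteq\operatorname{range}(P)$, which by (a) says exactly that $\textbf{v}$ is constant on each cluster, and it also gives $PP^T\textbf{v}=\textbf{v}$, i.e.\ $P$ ``defines a projection for these eigenvectors.'' Under the generic hypothesis that $\textbf{x}_1,\dots,\textbf{x}_{F_C}$ are linearly independent — equivalently $X_c$ has full column rank $F_C$, which is what makes $X^TX$ have $F_C$ nonzero eigenvalues at all — the span of these eigenvectors is $\operatorname{range}(X^TX)$, an $F_C$-dimensional subspace of the $F_C$-dimensional space $\operatorname{range}(P)$; the two therefore coincide, so $PP^T$ is in fact the orthogonal projector onto the span of the $F_C$ nonzero-eigenvalue eigenvectors of $X^TX$.

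For the one-to-one correspondence I would exhibit $\textbf{u}\mapsto P\textbf{u}$ and $\textbf{v}\mapsto P^T\textbf{v}$ as mutually inverse, eigenvalue-preserving bijections. If $X_c^TX_c\textbf{u}=\mu\textbf{u}$, then using $X^TX=P\,X_c^TX_c\,P^T$ and $P^TP=I$ we get $X^TX(P\textbf{u})=P\,X_c^TX_c\,\textbf{u}=\mu(P\textbf{u})$, with $P\textbf{u}\neq0$ because $P$ is injective, and $P^T(P\textbf{u})=\textbf{u}$. Conversely, if $X^TX\textbf{v}=\lambda\textbf{v}$ with $\lambda\neq0$, then by the first part $PP^T\textbf{v}=\textbf{v}$, hence $X_c^TX_c(P^T\textbf{v})=P^TX^TX(PP^T\textbf{v})=P^TX^TX\textbf{v}=\lambda(P^T\textbf{v})$, with $P^T\textbf{v}\neq0$ (otherwise $\textbf{v}=PP^T\textbf{v}=0$), and $P(P^T\textbf{v})=\textbf{v}$. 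Since $X_c^TX_c$ is an $F_C\times F_C$ positive definite matrix, all $F_C$ of its eigenvectors have nonzero eigenvalue, so this bijection matches them exactly with the $F_C$ nonzero-eigenvalue eigenvectors of $X^TX$, completing the proof.

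I expect the only point needing care to be a clean statement and proof of the ideal identity $XPP^T=X$ — this is the single place Definition~\ref{def:ideal} enters — together with making explicit the linear-independence hypothesis under which ``the $F_C$ eigenvectors with nonzero eigenvalues'' is literally correct (otherwise one should read $F_C$ as $\operatorname{rank}X_c$ throughout, and the same argument goes through verbatim). Once the two relations $P^TP=I_{F_C}$ and $X=X_cP^T$ are in hand, every remaining step is routine manipulation.
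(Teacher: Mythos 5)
Your proposal is correct and follows essentially the same route as the paper: both arguments hinge on the factorization $X^TX = P\,(X_c^TX_c)\,P^T$ together with $P^TP=I_{F_C}$ (the paper obtains the middle factor $K=X_c^TX_c$ by writing out the block structure of $X^TX$, you obtain it from $X=X_cP^T$, which is the same computation), and both then transport eigenpairs between $X_c^TX_c$ and $X^TX$ via $P$, where the paper invokes Rayleigh--Ritz and you spell out the mutually inverse maps $\textbf{u}\mapsto P\textbf{u}$ and $\textbf{v}\mapsto P^T\textbf{v}$. Your explicit flagging of the linear-independence of $\textbf{x}_1,\dots,\textbf{x}_{F_C}$ (needed for ``rank $F_C$'' and for exactly $F_C$ nonzero eigenvalues) is a point the paper's proof asserts without justification, so your version is, if anything, slightly more careful.
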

	\begin{proof}
		Given $X=[X_1,X_2, ... , X_{F_C}]$ the ideal data set with $X_i$ consisting of $n_i$ duplicate features $\textbf{x}_i$ for $i=1\dots F_C$. The matrix product $X^TX$ has rank $F_C$ and has the following block structure 
		\begin{equation}
			X^TX=\begin{bmatrix}
			\textbf{x}_1^T\textbf{x}_1\mathbbm{1}^{n_1\times n_1}&	\textbf{x}_1^T\textbf{x}_2\mathbbm{1}^{n_1\times n_2}&\ldots& \textbf{x}_1^T\textbf{x}_{F_C}\mathbbm{1}^{n_1\times n_{F_C}}\\
			\textbf{x}_2^T\textbf{x}_1\mathbbm{1}^{n_2\times n_1}& \ddots & &\vdots\\
			\vdots & &\ddots&\vdots \\
			\textbf{x}_{F_C}^T\textbf{x}_1\mathbbm{1}^{n_{F_C}\times n_1}&\ldots& & \textbf{x}_{F_C}^T\textbf{x}_{F_C}\mathbbm{1}^{n_{F_C}\times n_{F_C}}
			\end{bmatrix}.
		\end{equation}
		The matrix product $X^TX$ can be written as \begin{equation}
		X^TX=PKP^T
		\end{equation} with \begin{equation}
		K=\begin{bmatrix}
		\frac{\textbf{x}_1^T\textbf{x}_1}{n_1}&\frac{\textbf{x}_1^T\textbf{x}_2}{\sqrt{n_1}\sqrt{n_2}} &\ldots&\frac{\textbf{x}_1^T\textbf{x}_{F_C}}{\sqrt{n_1}\sqrt{n_{F_C}}}\\
		\frac{\textbf{x}_2^T\textbf{x}_1}{\sqrt{n_2}\sqrt{n_1}}&
		\ddots&  &\vdots \\
		\vdots& & \ddots&\vdots\\
		\frac{\textbf{x}_{F_C}^T\textbf{x}_1}{\sqrt{n_{F_C}}\sqrt{n_1}}&\ldots& &\frac{\textbf{x}_{F_C}^T\textbf{x}_{F_C}}{n_{F_C}}\\
		\end{bmatrix}.
		\end{equation}
		Using the eigendecomposition of $K=\hat{V}\Lambda \hat{V}^T$, we have \begin{equation}
		X^TX=P\hat{V}\Lambda \hat{V}^T P^T.
		\end{equation}
		By the structure of P, the element values of the eigenvectors with nonzero eigenvalues $P\hat{\textbf{v}}_i=\textbf{v}_i$ are the same within each cluster. Given $P\hat{\textbf{v}}_i=\textbf{v}_i$, we have that $PP^T\textbf{v}_i=P\hat{\textbf{v}}_i=\textbf{v}_i$ and the interpolation operator (\ref{eq:prolong}) defines a projection for the eigenvectors with constant values within the clusters. Using the Rayleigh-Ritz method, the eigenpairs $(\lambda_i,\hat{\textbf{v}}_i)$ of $P^TX^TXP$ can be used to find the original eigenpairs $(\lambda_i,P\hat{\textbf{v}}_i=\textbf{v}_i).$
	\end{proof}	
	\subsection{Clustering}\label{sect:clustering}
	For matrices originating from grids, the coarser levels are defined by aggregating grid elements based on different criteria. Recall that for our applications, the matrix consists of samples with features. Similarity between features can be used to cluster these features. Typically a predefined distance measure $d$ is used to define similarity, widely used distance measures are Euclidean distance, Jaccard distance and the cosine distance. All features within one cluster are then similar to each other and can be represented by the average feature. In this section we look at three clustering algorithms and one sub-sampling technique to create a coarser level. 
	\subsubsection{Leader-follower clustering}
	Leader-follower clustering is given in Algorithm \ref{alg:Leader-Follower} \cite{hartigan1975clustering}. Starting from an initial set of leading clusters, each sample is added to an existing cluster if the sample is similar within the required tolerance to the cluster leader. This cluster leader can then optionally be updated. If a sample is not similar enough to any existing leader, it forms a new cluster.  
	\begin{algorithm}[ht]
		\begin{algorithmic}[1]
			\State \textbf{Initialise:}	given an initial set of clusters $C=\{\textbf{c}_s\}$ and a distance measure $d$
			\For {each sample $\textbf{x}_i$ in $X$}
			\State $\text{index}=\arg \min_{s |\textbf{c}_s\in C} d(\textbf{x}_i,\textbf{c}_s)$ 
			\If {$d(\textbf{x}_i,\textbf{c}_{\text{index}})<\text{Tolerance}$}
			\State Add $\textbf{x}_i$ as follower to the cluster leader $\textbf{c}_{\text{index}}$ and update number of followers $n_{\text{index}}=n_{\text{index}}+1$ 
			\State Optionally update the cluster leader:  $\textbf{c}_{\text{index}}=\textbf{c}_{\text{index}}+\frac{\textbf{x}_i-\textbf{c}_{\text{index}}}{n_{\text{index}}}$
			\Else
			\State Add $\textbf{x}_i$ as cluster leader: $C=C+\{\textbf{x}_i\}$
			\EndIf
			\EndFor
		\end{algorithmic}
		\captionof{algorithm}{Leader-follower clustering}
		\label{alg:Leader-Follower}
	\end{algorithm}
	
	By varying the tolerance more or less clusters are found by Leader-Follower clustering. Small tolerance gives more clusters while high tolerance produces a smaller number of clusters. For our experiments we do not update the leader and preserve sparsity of the leaders if a new sample is added to his followers.
	
	\subsubsection{K-means++}
	Another option to perform clustering on the features is K-means++. The algorithm consists of two phases \cite{arthur2007k}. During the setup phase, K-means++ finds initial $K$ prototypes based on the distance between features. These prototypes can be considered as the leaders for their cluster. The first prototype is chosen at random with the features uniformly distributed. The next initial prototype is randomly chosen with a probability weighted by the distance squared of each data points to the existing prototype. 
	
	The K-means++ algorithm then proceeds to the next phase by assigning each point to a cluster and then updating the prototypes according to its assigned cluster points. Once the prototypes are updated, the data points are reassigned. When the assignments no longer change, the algorithm has converged. 
	\subsubsection{Quadratic R\'enyi-entropy based subsampling} 
	Defining the coarser level can be done by maximizing the entropy of the coarser level or subset. R\'enyi-entropy \cite{Girolami2002Orthogonal,renyi1961measures} based sub-sampling for large-scale data starts from a working set and a training set \cite{de2010optimized}. The entropy of the working set is the decisive parameter and the elements of the working set define the coarse level. Two random data points are chosen from both sets and the R\'enyi-entropy is calculated for both sets. The chosen points from both sets are swapped and the R\'enyi-entropy is recalculated for both sets. If there is an increase in entropy for the working set, the data points are switched else the original sets are maintained. 
	
	The quadratic R\'enyi-entropy for a subset of size $F_C$ is given by $$S_{R2}^{F_C}=-\log\left(\frac{1}{F_C^2|D|^2}\sum_{k=1}^{F_C}\sum_{l=1}^{F_C}\kappa\left(\frac{\textbf{x}_k-\textbf{x}_l}{D\sqrt{2}}\right)\right)$$ where $\kappa$ is a kernel function and $D$ the diagonal of the bandwidth values of the kernel in each dimension. Note that for each pair of data points under investigation the entropy can be updated based on both data points and there is no need to recalculate the entropy completely. These chosen features act then as the prototypes for the clusters and the remaining features are assigned to the cluster of the closest prototype.  

	\subsubsection{Graph Aggregation}
	Clustering of graphs has been used to find communities in networks \cite{schaeffer2007graph,blondel2008fast,lancichinetti2009community,spectralAMG2019Ambra}. A graph $G=(V,E)$ consists of a set of nodes $V$ and edges $E$ connecting two nodes. A symmetric matrix $A$ can be represented by its undirected adjacency graph $G$. For data matrices $X$, computing the product $X^TX$ will result in an symmetric matrix. The adjacency graph for this matrix-product results in a very dense graph, since each feature is highly probable to be connected to a large number other features. Constructing a $k$-nearest neighbor graph, however, results in a sparse graph connecting each feature to its $k$-closest features. Creating a $k$-nearest neighbor graph can be computationally intensive but an approximate graph can be computed efficiently \cite{Muja2014Scalable}. Graph matching has been used for AMG \cite{brannick2012algebraic,brannick2013algebraic,BootCmatchAmbra}. We consider the half-approximate maximum product matching as detailed in BootCMatch\cite{BootCmatchAmbra}. Using the maximum product matching of the graph, pairwise aggregate nodes or clusters are formed as given in Algorithm \ref{alg:graph_aggregation}. 	
	\begin{algorithm}[ht]
		\begin{algorithmic}[1]
			\State \textbf{Given:}	graph $G$ with $F$ nodes and weight matrix $A$
			\State Compute maximum product matching  $M$
			\State Set $F_C=0$ and $U=[1,\dots, F]$
			\While {$U\ne \emptyset}$
			\State Select $i\in U$ at random
			\If {$\exists j\in U\backslash\{i\}$ and $(i,j)\in M$}
			\State $F_C=F_C+1$
			\State Create new aggregate pair $C_{F_C}$: $C_{F_C}=\{i,j\}$
			\State Remove $\{i,j\}$ from $U=U\backslash \{i,j\}$
			\Else
			\State $F_C=F_C+1$
			\State Create new aggregate singleton $C_{F_C}$: $C_{F_C}=\{i\}$
			\State Remove $\{i\}$ from $U=U\backslash \{i\}$			
			\EndIf
			\EndWhile
			\State \textbf{Return:} Aggregates $C_{1},\dots,C_{F_C}$
		\end{algorithmic}
		\captionof{algorithm}{Pairwise aggregation based on maximum product matching\cite{BootCmatchAmbra} }
		\label{alg:graph_aggregation}
	\end{algorithm}
	\subsection{Least-squares interpolation} \label{sect:lsp}
	We assume that the coarse level is able to approximate the principal components. For AMG, it is possible to define the interpolation operator $P$ to exactly interpolate certain prototype vectors in a least-squares manner \cite{brandt2011bootstrap,manteuffel2010operator}. Suppose we have the $K$ first principal components $V=\{\textbf{v}_1,\dots,\textbf{v}_K\}$ and $V\in\mathbb{R}^{F\times K}$. Least squares interpolation minimizes the error squared, made when interpolating these principal components, for each feature $i$ and thus defines each row $P(i,:)$  
	\begin{equation}
	P(i,:)=\arg \min_{p_{ij}, j\in \mathcal{C}_{i}} \sum_{k=1}^K \eta_k\Bigl(\textbf{v}_k(i)-\sum_{j\in \mathcal{C}_{i}}p_{ij}\textbf{v}_k(j)\Bigr)^2 \label{eqn:lsP}
	\end{equation} 
	with $\mathcal{C}_{i}$ the coarse features used to define the value of fine feature $i$ and weights $\eta_k=\langle (X^TX+\beta I)\textbf{v}_k,\textbf{v}_k\rangle $ chosen to reflect the energy norm. For the given aggregation techniques in Section \ref{sect:clustering}, we have that $|\mathcal{C}_{i}|=1$, since each feature is assigned to one cluster or aggregate. This can be extended if multiple clusters are used to interpolate the value of one fine feature. For example, the $k$-closest leaders to feature $i$ of Leader-Follower clustering can be chosen as the coarse features used in the interpolation process. The coarse features $\mathcal{C}_{i}$ for Leader-Follower clustering are the leaders and for R\'enyi-entropy subsampling the features in the working set. For K-means++, there is no natural definition for the coarse feature and one of the features of one cluster is randomly chosen as the representing coarse feature.     
	
	In Bootstrap AMG \cite{brandt2011bootstrap} the prototypes vectors approximate the eigenvectors with small eigenvalues. Bootstrap AMG refines the prototypes used for interpolation by calculating the eigenvectors on the coarsest level. In contrast, we are interested in the principal eigenvectors. In practice, only a few prototype vectors are required and these can be simply calculated using Arnoldi iterations\cite{arnoldi1951principle} on the finest level. 
	
	\subsection{Multilevel preconditioning}\label{sect:multilevel}
	In AMG, coarsening is applied hierarchically until the size is sufficiently small and exact solvers can be applied. The multilevel concept has proven its merits in various applications, see for example \cite{brandt2002multiscale}. The clustering algorithms are used hierarchically to create several levels of coarser features. For K-means and R\'enyi-entropy subsampling the size of the coarse level is chosen. For the leader follower algorithm, the threshold can be varied. The performance of the clusterings algorithms is determined by the correlation of the features. The smaller the dimension of the coarse level, the larger the variance within clusters.
	\begin{algorithm}[ht]
		\caption{Flexible CG \cite{notay2000flexible}}
		\captionof{algorithm}{Flexible CG \cite{notay2000flexible}}
		\label{alg:fpcg}
		\begin{algorithmic}[1]
			\State Given: $m$
			\State Initialize: \begin{itemize}
				\item $\textbf{r}_0=\textbf{b}$
				\item $\textbf{x}_0=\textbf{0}$ 
			\end{itemize} 
			\For{$i=0,\dots, k$}
			\State $\textbf{z}_i=M^{-1}_i\textbf{r}_i$ 
			\State $\textbf{p}_i=\textbf{z}_i-\sum_{k=i-m_i}^{i-1}\frac{\langle\textbf{z}_i,A\textbf{p}_k\rangle}{\langle \textbf{p}_k,A\textbf{p}_k\rangle}\textbf{p}_k$ 
			\State $\textbf{x}_{i+1}=\textbf{x}_i + \frac{\langle\textbf{p}_i,\textbf{r}_i\rangle}{\langle \textbf{p}_i,A\textbf{p}_i\rangle}\textbf{p}_i$
			\State $\textbf{r}_{i+1}=\textbf{r}_i - \frac{\langle\textbf{p}_i,\textbf{r}_i\rangle}{\langle \textbf{p}_i,A\textbf{p}_i\rangle}A\textbf{p}_i$
			\EndFor
		\end{algorithmic}
	Advised truncation strategy: $m_0= 0$; $m_i=\max (1,\mod(i,m+1)) $ and $m>0$, $i>0$.
	\end{algorithm}
	 
 Since we propose the use of a non-stationary smoother, flexible preconditioning is required. Flexible variants for GMRES \cite{saad1993flexible} and CG are available with Flexible CG given in Algorithm \ref{alg:fpcg} \cite{notay2000flexible}. These flexible iterative solvers allow for the coarser levels to be solved inexactly and hence even recursive multilevel Krylov-based cycles are possible \cite{notay2008recursive}. In theory, it is possible to create a multilevel preconditioner. In practice, however, it is more efficient to only use the coarsest level and skip the intermediate levels as experimentally shown in Section \ref{sect:Experiments}. 

\section{Numerical Experiments}\label{sect:Experiments}
We have applied our preconditioner for variants of the linear system  $$(X^TX+\beta I)\textbf{x}=X^T\textbf{b}$$ with $\textbf{b}$ a normal distributed random vector using different feature matrices $X$. {Table \ref{tab:data}} provides the characteristics of the used data matrices $X$ in our experiments. For the larger data sets with  more than $10\ 000$ features, the coarse size $F_C$ was chosen in the range $[3000,7500]$, since calculating the Cholesky decomposition for this dimension range is feasible. Leader-Follower and Graph aggregation do not allow to set the size of coarse level up front. For Leader-Follower, a large threshold is chosen and decreased until the coarse size is in the required range. For large threshold, a Leader-Follower clustering is computed faster than when using a small threshold. Graph aggregation was recursively applied until the coarse level was small enough. For the smaller data sets, the coarse level was chosen such that the number of features was reduced by a factor two or more. The experiments were implemented in C++\footnote{\url{https://scm.cs.kuleuven.be/scm/git/multilevel_macau}} and compiled with gcc 9.3.0 and OPENMP 4.0 with compile option -O3 using a machine with an Intel(R) Core(TM) i7-6560U (2.20GHz) processor with an L3 cache memory of 4096 KB and 16 GB of DRAM where 3 out of 4 cores were used.   

\begin{table}[ht]
	\centering
	\scalebox{0.9}{
		\begin{tabular}{ lp{3cm}lllll} \toprule
			Name&Source&value type &\#rows&\#columns&\#nonzeros&fillin($\%$) \\ \midrule
			Trec10&\cite{davis2011university}&double &106&478&8612&17.00\\
			CNAE&\cite{Lichman2013}&double &1080&856&7233&0.78\\							
			micromass&\cite{Lichman2013}&double &360&1300&48\ 713&10.41\\
			ovariancancer&\cite{MatlabOTB}&double &216&4000&864\ 000&100\\
			gisette&\cite{Lichman2013}&double &6000&5000&29\ 729\ 997&99.1\\
			DrivFace&\cite{Lichman2013}&double &606&6400&3\ 878\ 400&100\\
			air04&\cite{davis2011university}&double &823&8904&72\ 965&1.00\\
			arcene&\cite{Lichman2013}&double &100&10\ 000&540\ 941&54.09\\
			SNP1&\cite{AlphaMPSim}&short int &10\ 000&10\ 000&57\ 641\ 064&57.64\\
			tmc2007& SIAM 2007 Text Mining competition&double &21\ 519&30\ 438&2\ 283\ 179&0.35\\
			nlpdata&\cite{MatlabOTB}&double &31\ 572&34\ 023&2\ 277\ 757&0.21\\				
			rcv1\_multi&\cite{Lewis:2004:RNB:1005332.1005345}&double &15\ 564&47\ 236&1\ 028\ 284&0.14\\
			SNP2&\cite{AlphaMPSim} &short int&10\ 000&50\ 000&282\ 212\ 533&56.44\\
			news20&\cite{keerthi2005modified}&double &15\ 935&62\ 061&1\ 272\ 569&0.13\\
			MSI mouse&\cite{doi:10.1021/ac402540a}&double  &14\ 640&80\ 339&329\ 606\ 165&28.02\\
			SNP3&\cite{AlphaMPSim}&short int &10\ 000&100\ 000&570\ 261\ 477&57.03\\
			E2006&\cite{kogan-etal-2009-predicting}&double &16\ 087&150\ 360&19\ 971\ 015&0.83\\
			ChEMBL&\cite{bento2014chembl}&binary &167\ 668&291\ 714&12\ 246\ 376&0.03	\\					
			\bottomrule
	\end{tabular}}
	\caption{ The characteristics of the data matrices $X$ used in the experiments. }\label{tab:data}
\end{table}

{Table \ref{tab:overview}} shows the execution time for regularized least-squares linear system (LS) \eqref{eq:xtx=xtb} with fixed tolerance $\epsilon$ and $\beta$ for CG, diagonally preconditioned CG (DCG) and flexible CG using a two-level preconditioner (TLFCG) with two iterations of CG as pre-smoother. The coarse level was created using Leader-Follower clustering with Euclidean distance and the adjusted average interpolation in \eqref{eq:prolong}. The Euclidean distance was used for all given experiments. {Table \ref{tab:overview}} additionally provides the number of iterations and the speed-up with respect to CG for DCG and TLFCG. Besides the execution time for solving \eqref{eq:xtx=xtb}, {Table \ref{tab:overview}} details the execution time of Bayesian regression (BR) with noise injection using $600$ samples for CG and TLFCG. For BR the coarse level was solved using the SVD while the linear system uses the Cholesky Decomposition.  
\begin{table}[ht]
	\centering
	\renewcommand{\arraystretch}{1.1}
	\scalebox{0.8}{
		\begin{tabular}{ l|cc|ccc|ccc|cccccc} \toprule
			& &  &\multicolumn{3}{c|}{CG}&\multicolumn{3}{c|}{DCG}&\multicolumn{5}{c}{TLFCG} \\
			Name&$\beta$&$\epsilon$& Iter.& LS(s) &BR(s)& Iter.& LS(s)&SU & Iter.& LS(s)&SU &BR(s)& SU-BR&$F_C$ \\ \midrule
			Trec10&1E-8&1E-8&302 &3.40E-3 &1.131 & 127 &1.86E-3&1.83 & \textbf{1} &\textbf{1.2E-4}&\textbf{28.19} &\textbf{4.17E-1} &\textbf{2.71}&207\\
			CNAE&1E-8&1E-8&785 &1.21E-2 &\textbf{3.43 }& 313 &\textbf{6.15E-03}&\textbf{1.96} & \textbf{107} &2.25E-2&0.54 &4.83&0.71 &375		\\
			micromass&1&1E-6& 1399 &6.86E-2 &1.29E1 &229& 1.02E-2&6.74  &\textbf{1}&\textbf{5.48E-4}&\textbf{125.18} &\textbf{8.21E-1}&\textbf{15.69} &483 \\
			ovariancancer& 1E-8&1E-8&160&1.51E-1&3.41E1 &54&5.85E-2&2.58 &  \textbf{1}& \textbf{5.28E-3}&\textbf{28.6} &\textbf{1.41E1 }& \textbf{2.42}& 435 \\
			gisette&1E-4&1E-4&12\ 603 &4.45E+2&1.81E5 &13\ 119& 4.72E+2&0.94
			 & \textbf{300}&\textbf{5.70E+1}&\textbf{7.81 }&\textbf{1.21E2}&\textbf{1495.87} &2465 \\
			DrivFace&1E-8&1E-8&577&3.53&4.13E2 &604& 2.87& 0.95 &\textbf{1}&\textbf{3.0E-2}&\textbf{ 90.77}&\textbf{9.26E1} &\textbf{4.46} &2028 \\			
			air04& 1E-8&1E-8&440&3.58E-2&\textbf{1.68E1 }&400&4.35E-2&0.82 &\textbf{2}&\textbf{2.44E-2}& \textbf{1.47}&8.57E1 & 0.20& 3045 \\
			arcene&1E-4&1E-8&55&2.96E-2&8.85 &51&3.27E-2&0.91 &\textbf{1}&	\textbf{1.73E-2}&\textbf{1.71}&\textbf{6.36 }&\textbf{1.39} & 3274 \\
			SNP1&1E-2 &1E-3 	&11061 &6.89E2 & 3.64E3 &14393 &8.98E2 & 0.76      &\textbf{97} &\textbf{3.19E1} &\textbf{21.60} &\textbf{8.56E2}&\textbf{4.25} &3216 \\
			tmc2007&1E-4 &1E-4&3875&1.93E+1&4.17E3 &506&\textbf{2.61 }&\textbf{7.41}  &\textbf{167}&8.76&	2.20&\textbf{1.92E3} &\textbf{2.17} &4728\\
			nlpdata& 1E-4&1E-3& 98\ 093&3.31E+2&1.56E5 &8805&3.32E+1&9.97 &\textbf{227}&\textbf{1.40E+1}&\textbf{23.60}&\textbf{6.75E2} &\textbf{231.11} &5793	\\
			rcv1\_multi&1E-4&1E-3&1523 &2.42 &\textbf{1.43E3} & 1030 &\textbf{1.96} &\textbf{1.24}  &\textbf{201}&5.96&0.41 &3.55E3&0.40 &3944  \\
			SNP2&1E-2 &1E-3 	&18\ 371 &6.11E+3 &3.91E4 	  &24\ 581 &7.14E+3 &0.86       &\textbf{53} &\textbf{8.58E+1 } & \textbf{71.19}& \textbf{6.69E3}& \textbf{5.85}&5582 \\
			news20&1E-6 &1E-6 	&21\ 830 &6.59E+1 & \textbf{1.288E4}	  &21\ 268 &7.09E+1 &0.93      &\textbf{707} &\textbf{4.88E+1 }&\textbf{1.35 }&1.38E4&0.93 &6151 \\			
			MSI mouse&1E-6 &1E-6 	&675 &3.44E2 & 1.12E6	  &108 &\textbf{5.65E1} & \textbf{6.09}      &\textbf{30} &8.00E1 &4.31 &\textbf{5.84E5} &\textbf{1.91} &5269 \\
			SNP3&1E-2 &1E-3 	&11\ 512 &7.45E+3 &6.84E4 	  &10\ 358 &6.45E+3 & 1.16     &\textbf{23} &\textbf{1.21E+2} &\textbf{61.61} &\textbf{1.52E4} & \textbf{4.5} &4893 \\	
			E2006&1E-6 &1E-6 	&3943 &1.74E+2 & 2.52E2	  &5529 &2.48E2 &0.70       &\textbf{203} &\textbf{5.97E1} &\textbf{2.91} &\textbf{2.38E2} & \textbf{1.06}&7036 \\
			ChEMBL&1E-3&1E-3   &11790 &2.43E2 &5.79E4   &11535 &2.66E2 &0.91    &\textbf{712} &\textbf{1.06E2}&\textbf{2.29} &\textbf{4.14E4} &\textbf{1.40} &5581 \\ 							
			\bottomrule
	\end{tabular}}
	\caption{ The number of iterations and execution time for solving regularized least squares linear systems using CG, diagonally preconditioned CG (DCG) and FCG accelerated with our two-level preconditioner (TLFCG) for different data sets. Additionally the execution time for Bayesian regression using 600 samples is reported for CG and TLFCG. The reported speed-up (SU) is with respect for CG for solving the linear system. For TLFCG, the speed-up for Bayesian regression (SU-BR) is also given. TLFCG achieves speed-up for most of the data sets and solves the linear system with the smallest number of iterations.}\label{tab:overview}
\end{table}

Most of the data sets in {Table \ref{tab:data}} are underdetermined or undersampled. Regularization is added and by setting $\beta>0$, noise components with large norms are penalized. In machine learning, the regularization parameter is often used as a measure against overfitting and several values are tried using cross-validation. Furthermore, by increasing $\beta$ the condition number decreases and CG converges faster. The values of $\beta$ and $\epsilon$ in {Table \ref{tab:overview}} are chosen from a set for which CG converges slow but within a reasonable number of iterations. More results for varying $\beta$ and $\epsilon$ are given in Section \ref{sect:regu}. Note that if $\beta$ is chosen very large, CG will converge fast and no preconditioner is required. For BR, the value of $\beta$ is determined automatically within the sampling process and the reported tolerance $\epsilon$ is used. 

As can be seen from {Table \ref{tab:overview}}, applying our two-level preconditioner results in speed-up with respect to CG for most of the data sets. Diagonally preconditioned CG can reduce the number of iterations but the gain is often small or there is no gain at all. TLFCG requires less iterations to reach the required tolerance for all cases but each iteration is more expensive. The actual speed-up depends on the quality of clusters. The goal is to achieve clusters for which the correlation within a cluster a high but correlation between clusters is small which is data dependent. Note that the gisette data set is nearly dense with more samples than features and other techniques to solve the linear system are better suited than CG. 

\begin{table}[ht]
	\centering
	\renewcommand{\arraystretch}{1.1}
	\scalebox{0.8}{
		\begin{tabular}{ l|ccc|cc|cc|cc} \toprule
			&\multicolumn{3}{c|}{LF}&\multicolumn{2}{c|}{KM}&\multicolumn{2}{c|}{RE}&\multicolumn{2}{c}{GA} \\
			Name&$F_C$&Cholesky(s)&SVD(s) &$F_C$&Cholesky(s)&$F_C$&Cholesky(s)& $F_C$&Cholesky(s)\\ \midrule
			Trec10&207&1.532E-2&2.57E-2&200&1.71E-2&200&1.4E-2&262&4.71E-3 \\
			CNAE &375&1.52E-2&6.32E-2&	400& 2.75E-2&400&1.51E-1&528 &5.21E-2\\
			micromass &483& 9.01E-2& 1.01E-1& 500&1.97E-1&500&4.01E-1&745&9.09E-2\\
			ovariancancer& 435&1.08&8.34E-1&400&8.29&400&1.42& 510&1.62\\
			gisette&2465&8.58E1 &1.23E2&3000&1.01E3&3000&2.72E2& 3583&8.4E1\\
			DrivFace&2028& 8.17&4.31 &2000&9.00E1&2000&2.17E1&3441&1.11E1 \\			
			air04 & 3045&4.26E-1&3.11&3000&4.76&3000&2.28&5069&1.14 \\
			arcene& 3274&1.90& 1.78& 3000&2.67E1&3000&8.05&5572&2.8\\
			SNP1 &3216 &1.56E2 &2.02E2&5000&1.89E3&5000&2.1E3&5564&2.73E3\\
			tmc2007&4728&6.03& 1.58E3 &5000&2.77E2&5000&3.87E1 \\
			nlpdata &5793&7.11&1.58E3 &6000&3.02E2&6000&2.99E1&5413&2.20E1	\\
			rcv1\_multi&3944 &2.73 &5.79E2&4000&1.79E2& 4000&7.63\\
			SNP2&5582 &1.04E3 &1.11E3&5000& 7.37E3&5000&3.86E3&4720&5.31E3\\
			news20&6151 &6.09 &2.05E2&6000&3.79E2& 6000 &2.14E1 & 7059&7.06E1\\			
			E2006&7036 &5.59E1 &2.60E3&7000 &8.80E3  &7000&3.34E2 & 6955&4.01E2\\
			ChEMBL& 5581 &2.28E1&1.58E3 & & & & \\
			\bottomrule
	\end{tabular}}
	\caption{ The setup time of our two-level preconditioner. The reported times are the sum of the clustering time and the time to create the coarse solver. The singular value decomposition (SVD) is only reported in combination with Leader-follower (LF) clustering. For K-means++ (KM), R\'enyi-entropy subsampling and maximum product graph aggregation (GA), the setup time in combination with the Cholesky decomposition is reported for most data sets. The setup time is related to the number of nonzeros of $X$ as given in Table \ref{tab:data}.   }\label{tab:setup_time}
\end{table}

The setup time for TLFCG is detailed in {Table \ref{tab:setup_time}} for Leader-Follower clustering (LF), K-means++ (KM), R\'enyi-entropy based subsampling (RE) and graph aggregation  (GA) using a $5$-nearest neighbour graph. These reported times are in combination with a Cholesky decomposition on the coarsest level. For LF, the setup time using a singular value decomposition (SVD) is additionally given. Computing the Cholesky decomposition or SVD was done using LAPACK\cite{laug}. For SVD, only the right singular vectors of $X_C$ are required to solve equation \eqref{eq:xtx=xtb} on the coarse level. As can be seen from {Table \ref{tab:setup_time}} the setup time is directly related to the number of nonzeros (see {Table \ref{tab:data}}).  

The sparser the data, the smaller the clustering time. The difference in setup time between Cholesky and SVD depends on the number of rows of $X_C$. If the number of rows is higher than the number of columns, calculating the SVD will result in larger execution time. The optimal value of $\beta$ on the coarse level is the same as the value of $\beta$ on the finest level. This means that when using the same coarse matrix $X_C$, the Cholesky decomposition has to be recalculated if $\beta$ varies while the SVD can be reused. Note that the execution time of computing the Cholesky composition can be insignificant with respect to the time solving the linear system. As example for the ChEMBL data, computing only the Cholesky decomposition for a coarse level with $5581$ features takes about one second while solving the linear system takes about four minutes.  

\subsection{Designing the two-level preconditioner}\label{sec:interpclust}
There are different ways to create the two-level preconditioner. Firstly, there are different clustering algorithms. We have detailed Leader-Follower (LF), K-means++ (KM), R\'{e}nyi-entropy based subsampling (RE) and graph based aggregation (GA). These are basic but generally applicable clustering algorithms. Next, there are different ways to design the interpolation operator $P$ and subsequently create the coarse level from a clustering. We defined using the plain average (A), the adjusted squareroot average (SA) as detailed in \eqref{eq:prolong} and the least-squares interpolation used in bootstrap (LS). Figure \ref{fig:clustering} shows the relative residual in function of the numbers of iterations for different clustering algorithms and different interpolation operators for four data sets using the same values for $\beta$ and $\epsilon$ as given in Table~\ref{tab:overview}. The dimension of the coarse levels are given in Table \ref{tab:setup_time}. The coarse sizes for KM and RE  were chosen close to the values of LF and GA. The coarse level in these experiments was solved using a Cholesky Decomposition. 
\begin{figure}
	\centering
	\begin{subfigure}{.495\textwidth}
		\tikzsetnextfilename{news20ca}%
		\resizebox{\linewidth}{!}{\begin{tikzpicture}
\begin{axis}[
height=8cm,
width=12cm,
cycle list name=clusters,
ymode = log,
grid=both,
grid style={line width=.1pt, draw=gray!20},
major grid style={line width=.2pt,draw=gray!60},
minor tick num=5,
xmin=0,xmax=1500,
xlabel=Iteration,ylabel=$\frac{||X^T\textbf{b}-(X^TX+\beta I)\textbf{x}||}{||X^T\textbf{b}||}$,
legend columns=2, 
legend pos=north east,
legend style={font=\small}
]
\addplot+[mark=none] table[x=iter, y=res] {Tikz/news20_lf_pffff_1e-61e-6.dat};
\addplot+[mark=none] table[x=iter, y=res] {Tikz/news20_graph_pffff_1e-61e-6.dat};
\addplot+[mark=none] table[x=iter, y=res] {Tikz/news20_km_pffff_1e-61e-6.dat};
\addplot+[mark=none] table[x=iter, y=res] {Tikz/news20_re_pffff_1e-61e-6.dat};
\legend{LF-SA,GA-SA,KM-SA,RE-SA}
\end{axis}
\end{tikzpicture}   }
		\caption{news20}
		\label{fig:news20_clusteringa}
	\end{subfigure}
	\begin{subfigure}{.495\textwidth}
	\tikzsetnextfilename{news20cb}%
	\resizebox{\linewidth}{!}{\begin{tikzpicture}
\begin{axis}[
height=8cm,
width=12cm,
cycle list name=clusters,
ymode = log,
grid=both,
grid style={line width=.1pt, draw=gray!20},
major grid style={line width=.2pt,draw=gray!60},
minor tick num=5,
xmin=0,xmax=750,
xlabel=Iteration,ylabel=$\frac{||X^T\textbf{b}-(X^TX+\beta I)\textbf{x}||}{||X^T\textbf{b}||}$,
legend columns=1,
legend pos=north east,
legend style={font=\small}
]
\addplot+[mark=none] table[x=iter, y=res] {Tikz/news20_lf_pffff_1e-61e-6.dat};
\addplot+[mark=none] table[x=iter, y=res] {Tikz/news20_lf_pttff_1e-61e-6.dat};
\addplot+[mark=none] table[x=iter, y=res] {Tikz/news20_lf_pfftt_1e-61e-6.dat};
\legend{LF-SA,LF-LS,LF-A}
\end{axis}
\end{tikzpicture}   }
	\caption{news20}
	\label{fig:news20_clusteringb}
\end{subfigure}
	\begin{subfigure}{.495\textwidth}
	\tikzsetnextfilename{E2006ca}%
	\resizebox{\linewidth}{!}{\begin{tikzpicture}
\begin{axis}[
height=8cm,
width=12cm,
cycle list name=clusters,
ymode = log,
grid=both,
grid style={line width=.1pt, draw=gray!20},
major grid style={line width=.2pt,draw=gray!60},
minor tick num=5,
xmin=0,xmax=350,
xlabel=Iteration,ylabel=$\frac{||X^T\textbf{b}-(X^TX+\beta I)\textbf{x}||}{||X^T\textbf{b}||}$,
legend columns=2,
legend pos=north east,
legend style={font=\small}
]
\addplot+[mark=none] table[x=iter, y=res] {Tikz/E2006_lf_pffff_1e-61e-6.dat};
\addplot+[mark=none] table[x=iter, y=res] {Tikz/E2006_graph_pffff_1e-61e-6.dat};
\addplot+[mark=none] table[x=iter, y=res] {Tikz/E2006_km_pffff_1e-61e-6.dat};
\addplot+[mark=none] table[x=iter, y=res] {Tikz/E2006_re_pffff_1e-61e-6.dat};
\legend{LF-SA,GA-SA,KM-SA,RE-SA}
\end{axis}
\end{tikzpicture}   }
	\caption{E2006}
	\label{fig:E2006_clusteringa}
	\end{subfigure}
	\begin{subfigure}{.495\textwidth}
	\tikzsetnextfilename{E2006cb}%
	\resizebox{\linewidth}{!}{\begin{tikzpicture}
\begin{axis}[
height=8cm,
width=12cm,
cycle list name=clusters,
ymode = log,
grid=both,
grid style={line width=.1pt, draw=gray!20},
major grid style={line width=.2pt,draw=gray!60},
minor tick num=5,
xmin=0,xmax=350,
xlabel=Iteration,ylabel=$\frac{||X^T\textbf{b}-(X^TX+\beta I)\textbf{x}||}{||X^T\textbf{b}||}$,
legend columns=1,
legend pos=north east,
legend style={font=\small}
]
\addplot+[mark=none] table[x=iter, y=res] {Tikz/E2006_lf_pffff_1e-61e-6.dat};
\addplot+[mark=none] table[x=iter, y=res] {Tikz/E2006_lf_pttff_1e-61e-6.dat};
\addplot+[mark=none] table[x=iter, y=res] {Tikz/E2006_lf_pfftt_1e-61e-6.dat};
\legend{LF-SA,LF-LS,LF-A}
\end{axis}
\end{tikzpicture}   }
	\caption{E2006}
	\label{fig:E2006_clusteringb}
\end{subfigure}
	\begin{subfigure}{.495\textwidth}
	\tikzsetnextfilename{SNP2ca}%
	\resizebox{\linewidth}{!}{\begin{tikzpicture}
\begin{axis}[
height=8cm,
width=12cm,
cycle list name=clusters,
ymode = log,
grid=both,
grid style={line width=.1pt, draw=gray!20},
major grid style={line width=.2pt,draw=gray!60},
minor tick num=5,
xmin=0,xmax=75,
xlabel=Iteration,ylabel=$\frac{||X^T\textbf{b}-(X^TX+\beta I)\textbf{x}||}{||X^T\textbf{b}||}$,
legend columns=2,
legend pos=north east,
legend style={font=\small}
]
\addplot+[mark=none] table[x=iter, y=res] {Tikz/SNP2_lf_pffff_1e-61e-6.dat};
\addplot+[mark=none] table[x=iter, y=res] {Tikz/SNP2_graph_pffff_1e-61e-6.dat};
\addplot+[mark=none] table[x=iter, y=res] {Tikz/SNP2_km_pffff_1e-61e-6.dat};
\addplot+[mark=none] table[x=iter, y=res] {Tikz/SNP2_re_pffff_1e-61e-6.dat};
\legend{LF-SA,GA-SA,KM-SA,RE-SA}
\end{axis}
\end{tikzpicture}   }
	\caption{SNP2}
	\label{fig:SNP2_clusteringa}
\end{subfigure}
	\begin{subfigure}{.495\textwidth}
	\tikzsetnextfilename{SNP2cb}%
	\resizebox{\linewidth}{!}{\begin{tikzpicture}
\begin{axis}[
height=8cm,
width=12cm,
cycle list name=clusters,
ymode = log,
grid=both,
grid style={line width=.1pt, draw=gray!20},
major grid style={line width=.2pt,draw=gray!60},
minor tick num=5,
xmin=0,xmax=100,
xlabel=Iteration,ylabel=$\frac{||X^T\textbf{b}-(X^TX+\beta I)\textbf{x}||}{||X^T\textbf{b}||}$,
legend columns=1,
legend pos=north east,
legend style={font=\small}
]
\addplot+[mark=none] table[x=iter, y=res] {Tikz/SNP2_lf_pffff_1e-61e-6.dat};
\addplot+[mark=none] table[x=iter, y=res] {Tikz/SNP2_lf_pttff_1e-61e-6.dat};
\addplot+[mark=none] table[x=iter, y=res] {Tikz/SNP2_lf_pfftt_1e-61e-6.dat};
\legend{LF-SA,LF-LS,LF-A}
\end{axis}
\end{tikzpicture}   }
	\caption{SNP2}
	\label{fig:SNP2_clusteringb}
\end{subfigure}
	\begin{subfigure}{.495\textwidth}
		\tikzsetnextfilename{nlpdataca}%
	\resizebox{\linewidth}{!}{\begin{tikzpicture}
\begin{axis}[
height=8cm,
width=12cm,
cycle list name=clusters,
ymode = log,
grid=both,
grid style={line width=.1pt, draw=gray!20},
major grid style={line width=.2pt,draw=gray!60},
minor tick num=5,
xmin=0,xmax=300,
xlabel=Iteration,ylabel=$\frac{||X^T\textbf{b}-(X^TX+\beta I)\textbf{x}||}{||X^T\textbf{b}||}$,
legend columns=2,
legend pos=north east,
legend style={font=\small}
]
\addplot+[mark=none] table[x=iter, y=res] {Tikz/nlpdata_lf_pffff_1e-61e-6.dat};
\addplot+[mark=none] table[x=iter, y=res] {Tikz/nlpdata_graph_pffff_1e-61e-6.dat};
\addplot+[mark=none] table[x=iter, y=res] {Tikz/nlpdata_km_pffff_1e-61e-6.dat};
\addplot+[mark=none] table[x=iter, y=res] {Tikz/nlpdata_re_pffff_1e-61e-6.dat};
\legend{LF-SA,GA-SA,KM-SA,RE-SA}
\end{axis}
\end{tikzpicture}   }
	\caption{nlpdata}
	\label{fig:nlp_clusteringa}
\end{subfigure}
	\begin{subfigure}{.495\textwidth}
	\tikzsetnextfilename{nlpdatacb}%
	\resizebox{\linewidth}{!}{\begin{tikzpicture}
\begin{axis}[
height=8cm,
width=12cm,
cycle list name=clusters,
ymode = log,
grid=both,
grid style={line width=.1pt, draw=gray!20},
major grid style={line width=.2pt,draw=gray!60},
minor tick num=5,
xmin=0,xmax=260,
xlabel=Iteration,ylabel=$\frac{||X^T\textbf{b}-(X^TX+\beta I)\textbf{x}||}{||X^T\textbf{b}||}$,
legend columns=1,
legend pos=north east,
legend style={font=\small}
]
\addplot+[mark=none] table[x=iter, y=res] {Tikz/nlpdata_lf_pffff_1e-61e-6.dat};
\addplot+[mark=none] table[x=iter, y=res] {Tikz/nlpdata_lf_pttff_1e-61e-6.dat};
\addplot+[mark=none] table[x=iter, y=res] {Tikz/nlpdata_lf_pfftt_1e-61e-6.dat};
\legend{LF-SA,LF-LS,LF-A}
\end{axis}
\end{tikzpicture}   }
	\caption{nlpdata}
	\label{fig:nlp_clusteringb}
\end{subfigure}
	\caption{The relative residual in function of the number of iterations for different clustering algorithms (left) and different interpolation operators (right) for news20, E2006, SNP2 and nlpdata. The results left are depict the performance of Leader-follower clustering (LF), K-means++ (KM), R\'enyi-entropy subsampling and graph aggregation (GA). The subfigures on the right show the different interpolation operators: adjusted squareroot average (SA), average (A) and least squares interpolation (LS). LF in combination with SA or LS interpolation works well.}
	\label{fig:clustering}
\end{figure}

As can be seen in the left column of Figure \ref{fig:clustering}, the efficiency of the aggregation technique is data dependent. Leader-follower performs well for these four data sets. Note that using KM or LF results in significant reduction of number of iterations with respect to CG. For R\'enyi-entropy subsampling, the radial basis function (RBF) kernel $\mathcal{k}(x,y)= \exp\left(- \frac{||x-y ||^2}{2\sigma^2}\right)$ with $\sigma=0.6$ was used. The choice of kernel can be further optimized depending on the data. Since a kernel is required, RE is less optimal as a black-box solver. Graph aggregation using the maximum product matching performs well for the SNP2 data set, but less optimal for the other three data sets.    

The right column of Figure \ref{fig:clustering} details the different interpolations methods. The least-squares interpolation and the adjusted average \eqref{eq:prolong} have similar performance for these data sets with the least-squares interpolation slightly better for 3 data sets. Note that both these interpolation operators reduce the number of iterations significantly. Taking the average performs badly for two data sets, as seen in Figures \ref{fig:SNP2_clusteringb} and \ref{fig:E2006_clusteringb}.     
\begin{figure}
	\centering
	\begin{subfigure}{.495\textwidth}
	\tikzsetnextfilename{news20sa}%
	\resizebox{\linewidth}{!}{\begin{tikzpicture}
\begin{axis}[
height=8cm,
width=12cm,
cycle list name=smoothers,
ymode = log,
grid=both,
grid style={line width=.1pt, draw=gray!20},
major grid style={line width=.2pt,draw=gray!60},
minor tick num=5,
xmin=0,xmax=200,
xlabel=Iteration,ylabel=$||X^T\textbf{b}-(X^TX+\beta I)\textbf{x}||$,
legend columns=1,
legend pos=north east,
legend style={font=\small}
]
\addplot+[mark=none] table[x=iacg, y=racg] {Tikz/news20_smoothing2.dat};
\addplot+[mark=none] table[x=ippcg, y=rppcg] {Tikz/news20_smoothing2.dat};
\addplot+[mark=none] table[x=ipcg, y=rpcg] {Tikz/news20_smoothing2.dat};
\addplot+[mark=none] table[x=ipgm, y=rpgm] {Tikz/news20_smoothing2.dat};
\legend{Pre-CG,PrePost-CG,Post-CG,Post-GMRES}
\end{axis}
\end{tikzpicture}   }
		\caption{news20}
	\label{fig:news20_smoothinga}
	\end{subfigure}
	\begin{subfigure}{.495\textwidth}
	\tikzsetnextfilename{news20sb}%
	\resizebox{\linewidth}{!}{\begin{tikzpicture}
\begin{axis}[
height=8cm,
width=12cm,
cycle list name=smoothers,
ymode = log,
grid=both,
grid style={line width=.1pt, draw=gray!20},
major grid style={line width=.2pt,draw=gray!60},
minor tick num=5,
xmin=0,xmax=200,
xlabel=Iteration,ylabel=$||X^T\textbf{b}-(X^TX+\beta I)\textbf{x}||$,
legend columns=1,
legend pos=north east,
legend style={font=\small}
]
\addplot+[mark=none] table[x=iacg, y=racg] {Tikz/news20_smoothing2.dat};
\addplot+[mark=none] table[x=iagm, y=ragm] {Tikz/news20_smoothing2.dat};
\addplot+[mark=none] table[x=iars, y=rars] {Tikz/news20_smoothing2.dat};
\legend{Pre-CG,Pre-GMRES, Pre-RS}
\end{axis}
\end{tikzpicture}   }
	\caption{news20}
	\label{fig:news20_smoothingb}
\end{subfigure}
	\begin{subfigure}{.495\textwidth}
	\tikzsetnextfilename{E2006sa}%
	\resizebox{\linewidth}{!}{\begin{tikzpicture}
\begin{axis}[
height=8cm,
width=12cm,
cycle list name=smoothers,
ymode = log,
grid=both,
grid style={line width=.1pt, draw=gray!20},
major grid style={line width=.2pt,draw=gray!60},
minor tick num=5,
xmin=0,xmax=100,
xlabel=Iteration,ylabel=$||X^T\textbf{b}-(X^TX+\beta I)\textbf{x}||$,
legend columns=1,
legend pos=north east,
legend style={font=\small}
]
\addplot+[mark=none] table[x=iacg, y=racg] {Tikz/E2006_smoothing2.dat};
\addplot+[mark=none] table[x=ippcg, y=rppcg] {Tikz/E2006_smoothing2.dat};
\addplot+[mark=none] table[x=ipcg, y=rpcg] {Tikz/E2006_smoothing2.dat};
\addplot+[mark=none] table[x=ipgm, y=rpgm] {Tikz/E2006_smoothing2.dat};
\legend{Pre-CG,PrePost-CG,Post-CG,Post-GMRES}
\end{axis}
\end{tikzpicture}   }
	\caption{E2006}
	\label{fig:E2006_smoothinga}
\end{subfigure} 
	\begin{subfigure}{.495\textwidth}
	\tikzsetnextfilename{E2006sb}%
	\resizebox{\linewidth}{!}{\begin{tikzpicture}
\begin{axis}[
height=8cm,
width=12cm,
cycle list name=smoothers,
ymode = log,
grid=both,
grid style={line width=.1pt, draw=gray!20},
major grid style={line width=.2pt,draw=gray!60},
minor tick num=5,
xmin=0,xmax=100,
xlabel=Iteration,ylabel=$||X^T\textbf{b}-(X^TX+\beta I)\textbf{x}||$,
legend columns=1,
legend pos=north east,
legend style={font=\small}
]
\addplot+[mark=none] table[x=iacg, y=racg] {Tikz/E2006_smoothing2.dat};
\addplot+[mark=none] table[x=iagm, y=ragm] {Tikz/E2006_smoothing2.dat};
\addplot+[mark=none] table[x=iars, y=rars] {Tikz/E2006_smoothing2.dat};
\legend{Pre-CG,Pre-GMRES, Pre-RS}
\end{axis}
\end{tikzpicture}   }
	\caption{E2006}
	\label{fig:E2006_smoothingb}
\end{subfigure}  
	\caption{The residual in function of the number of iterations for different smoothing options for news20 and E2006. The used smoothers are CG, GMRES and Richardson-iteration (RS). The smoothers were applied before the coarse correction (Pre) , after the coarse correction (Post) and before and after the coarse correction (PrePost).}
	\label{fig:smoothing}
\end{figure}

After designing the coarse level, we propose to use CG as rougher. Figure \ref{fig:smoothing} shows the absolute residual in function the iteration for applying two iterations of CG before the coarse-correction (Pre-CG), before and after the coarse correction (PrePost-CG) or after the coarse correction (Post-CG). Figure \ref{fig:smoothing} additionally shows the result when applying GMRES or Richardson iteration instead of CG. As can be seen from Figures \ref{fig:news20_smoothinga} and \ref{fig:E2006_smoothinga}, applying CG before the coarse correction results in the faster and more stable convergence. Using CG as a rougher additionally has a small advantage over GMRES and Richardson iteration as shown in Figures \ref{fig:news20_smoothingb} and \ref{fig:E2006_smoothingb}. 


The chosen aggregation method results in a coarser level. This aggregation method can be recursively applied resulting in a hierarchy of levels. In theory a multilevel preconditioner can be created. Table \ref{tab:ml} however shows that in practice it is more efficient to only use the coarsest level. The number of fine level iterations is reduced by using the intermediate levels, but the overall execution time is larger since more time is spent on the intermediate levels. The two-level preconditioner can be seen as a subspace preconditioner combined with CG as a rougher. 
\begin{table}
	\begin{subtable}[t]{.5\linewidth}%
		\centering
		\renewcommand{\arraystretch}{1.1}
		\scalebox{0.75}{
			\begin{tabular}{c|cc|cc|cc}\toprule
				nb&\multicolumn{2}{c|}{$\epsilon_{\text{sub}}=1e-1$}&\multicolumn{2}{c|}{$\epsilon_{\text{sub}}=1e-3$}&\multicolumn{2}{c}{Two-level}\\
				levels& Iter. & Exec. (s)& Iter. & Exec. (s)& Iter. & Exec. (s)  \\\midrule
				3&67 &11.48 &13 &137.69 &55 &4.44 \\
			4&19 &48.57 &2 &77.11 &54 &  4.19\\
				\bottomrule
		\end{tabular}}
		\caption{news20}\label{tab:mlnews20}
	\end{subtable}%
	\begin{subtable}[t]{.5\linewidth}%
		\centering
		\renewcommand{\arraystretch}{1.1}
		\scalebox{0.75}{
	\begin{tabular}{c|cc|cc|cc}\toprule
		nb&\multicolumn{2}{c|}{ $\epsilon_{\text{sub}}=1e-1$}&\multicolumn{2}{c|}{$\epsilon_{\text{sub}}=1e-3$}&\multicolumn{2}{c}{Two-level}\\
		levels& Iter. & Exec. (s)& Iter. & Exec. (s)& Iter. & Exec. (s) \\\midrule
		3&2 & 1.09 &1 &7.32 &3 &0.83\\
		4&2 &1.28&1 &61.23 &2 &1.06  \\
		5&1 &45.21&1 &365.64 &3 &0.82  \\
		\bottomrule
	\end{tabular}}
		\caption{E2006}\label{tab:mlE2006}
	\end{subtable}%
	\caption{The number of fine level iterations and executions time for recursively applying FCG with different tolerances $\epsilon_{\text{sub}}$ on each level and directly projecting to the coarsest level (Two-level) for news20 and E2006. The required tolerance $\epsilon$ was set to $1E-3$ and $\beta=1E-6$. The size of the different levels for news20 are $[62061, 27543, 16315, 7101]$ and for E2006 $[150360,42337,27865,20629,8463$]. The coarsest levels are used in combination with the finest level if not all available levels are exploited in the preconditioner. }\label{tab:ml}
\end{table}

\subsection{Influence of the regularization parameter} \label{sect:regu}
The regularization parameter $\beta$ plays an important role in the performance of an iterative solver. Figure \ref{fig:betatol} shows the speed-up in execution time of two-level preconditioned FCG with respect to CG for different values of $\beta$ and different values of $\epsilon$. There are two phenomenons visible in the figure. Firstly, the larger the value of the regularization parameter $\beta$, the faster CG will converge and generally the smaller the speed-up we can achieve. The actual convergence depends on the eigenspectrum and is thus data dependent. Note that the eigenspectrum is influenced by the value of $\beta$. 

\begin{figure}[h]
	\centering
	\begin{subfigure}{.45\textwidth}
		\tikzsetnextfilename{news20betatol}%
		\resizebox{\linewidth}{!}{\begin{tikzpicture}
\begin{axis}[
height=8cm,
width=12cm,
xmode = log,
cycle list name=clusters,
grid=both,
grid style={line width=.1pt, draw=gray!20},
major grid style={line width=.2pt,draw=gray!60},
minor tick num=5,
xlabel=$\epsilon$,ylabel=Speed-up,
legend columns=1,
legend pos=north west,
legend style={font=\small}
]
\addplot+ table[x=tol, y=b1] {Tikz/news20_beta_tol.dat};
\addplot+ table[x=tol, y=b2] {Tikz/news20_beta_tol.dat};
\addplot+ table[x=tol, y=b3] {Tikz/news20_beta_tol.dat};
\addplot+ table[x=tol, y=b4] {Tikz/news20_beta_tol.dat};
\addplot+ table[x=tol, y=b5] {Tikz/news20_beta_tol.dat};
\legend{$\beta=1e-6$,$\beta=1e-5$,$\beta=1e-4$,$\beta=1e-3$,$\beta=1e-2$}
\end{axis}
\end{tikzpicture}   }
		\caption{news20}
		\label{fig:news20_betatol}
	\end{subfigure}
	\begin{subfigure}{.45\textwidth}
		\tikzsetnextfilename{E2006betatol}%
		\resizebox{\linewidth}{!}{\begin{tikzpicture}
\begin{axis}[
height=8cm,
width=12cm,
xmode = log,
cycle list name=clusters,
grid=both,
grid style={line width=.1pt, draw=gray!20},
major grid style={line width=.2pt,draw=gray!60},
minor tick num=5,
xlabel=$\epsilon$,ylabel=Speed-up,
legend columns=1,
legend pos=north west,
legend style={font=\small}
]
\addplot+ table[x=tol, y=b1] {Tikz/E2006_beta_tol.dat};
\addplot+ table[x=tol, y=b2] {Tikz/E2006_beta_tol.dat};
\addplot+ table[x=tol, y=b3] {Tikz/E2006_beta_tol.dat};
\addplot+ table[x=tol, y=b4] {Tikz/E2006_beta_tol.dat};
\addplot+ table[x=tol, y=b5] {Tikz/E2006_beta_tol.dat};
\legend{$\beta=1e-6$,$\beta=1e-5$,$\beta=1e-4$,$\beta=1e-3$,$\beta=1e-2$}
\end{axis}
\end{tikzpicture}   }
		\caption{E2006}
		\label{fig:E2006_betatol}
	\end{subfigure}
	\begin{subfigure}{.45\textwidth}
		\tikzsetnextfilename{SNP2betatol}%
		\resizebox{\linewidth}{!}{\begin{tikzpicture}
\begin{axis}[
height=8cm,
width=12cm,
xmode = log,
cycle list name=clusters,
grid=both,
grid style={line width=.1pt, draw=gray!20},
major grid style={line width=.2pt,draw=gray!60},
minor tick num=5,
xlabel=$\epsilon$,ylabel=Speed-up,
legend columns=1,
legend pos=north west,
legend style={font=\small}
]
\addplot+ table[x=tol, y=b1] {Tikz/SNP2_beta_tol.dat};
\addplot+ table[x=tol, y=b2] {Tikz/SNP2_beta_tol.dat};
\addplot+ table[x=tol, y=b3] {Tikz/SNP2_beta_tol.dat};
\addplot+ table[x=tol, y=b4] {Tikz/SNP2_beta_tol.dat};
\legend{$\beta=1e-2$,$\beta=1e-1$,$\beta=1$,$\beta=10$}
\end{axis}
\end{tikzpicture}   }
		\caption{SNP2}
		\label{fig:SNP2_betatol}
	\end{subfigure} 
	\begin{subfigure}{.45\textwidth}
		\tikzsetnextfilename{nlpdatabetatol}%
		\resizebox{\linewidth}{!}{\begin{tikzpicture}
\begin{axis}[
height=8cm,
width=12cm,
xmode = log,
cycle list name=clusters,
grid=both,
grid style={line width=.1pt, draw=gray!20},
major grid style={line width=.2pt,draw=gray!60},
minor tick num=5,
xlabel=$\epsilon$,ylabel=Speed-up,
legend columns=1,
legend pos=north west,
legend style={font=\small}
]
\addplot+ table[x=tol, y=b1] {Tikz/nlpdata_beta_tol.dat};
\addplot+ table[x=tol, y=b2] {Tikz/nlpdata_beta_tol.dat};
\addplot+ table[x=tol, y=b3] {Tikz/nlpdata_beta_tol.dat};
\addplot+ table[x=tol, y=b4] {Tikz/nlpdata_beta_tol.dat};
\legend{$\beta=1e-4$,$\beta=1e-3$,$\beta=1e-2$,$\beta=1e-1$}
\end{axis}
\end{tikzpicture}   }
		\caption{nlpdata}
		\label{fig:nlpdata_betatol}
	\end{subfigure}  
	\caption{ The speed-up of our two-level preconditioner using two iterations of CG as pre-smoother and Leader-Follower clustering with adjusted squareroot average interpolation with respect to CG for solving a regularized least squares linear system in function of the required tolerance $\epsilon$ for different regularization parameters $\beta$  for news20, E2006, SNP2 and nlpdata.}
	\label{fig:betatol}
\end{figure}

Figure \ref{fig:betatol} additionally shows that the speed-up depends on the required tolerance $\epsilon$. As can be seen in previous Figure \ref{fig:clustering} and \ref{fig:smoothing} the residual of our two-level preconditioner initially decays fast but does slow down. As a result the speed-up for the smaller tolerances can be less than for the larger tolerance. In practice however applications with required relative errors smaller than $10^{-6}$ are seldom.  

\section{Conclusion} \label{sect:conclusion}
The two-level preconditioner described here finds its application in machine learning. We have shown that it is possible to approximate the principal eigenvectors on a coarser level by means of clustering the columns of the feature matrix. This coarser level is used in a two-level preconditioner to accelerate Flexible CG. The efficiency
of our preconditioner is data dependent and clustering algorithms perform differently for distinct data sets. Using general clustering algorithms such as K-means++ and Leader-Follower, we have shown that the number of iterations reduces dramatically for several data sets. The setup time of the clustering algorithms can be significantly but can be reused when multiple system solves are required. The setup time can also be further reduced by only approximately calculating the distance to other features by using locality-sensitive-hashing \cite{gionis1999similarity,andoni2015optimal} . We showed results for clustering in the column space (features) of $X$. It is also possible to additionally cluster in the row space (samples), reducing
the execution time of SVD on the coarsest level.

Using the two-level preconditioner to accelerate FCG, speed-up was achieved for almost all data sets. The efficiency of the preconditioner was shown to be data dependent. Even modest relative speed-up can result in large absolute
gain. Even with modest speed-up, the number of iterations is significantly reduced resulting in a more robust iterative
solver. Using more than two-levels results in even less iterations but does increase the execution time.

\section{Acknowledgements}	
The authors would like to thank the reviewers for their comments and suggestions. This work was supported Research Foundation - Flanders (FWO, No. G079016N). We would additionally like to thank David Vanavermaete for generating the SNP data sets. 	

\bibliography{references.bib} 
\end{document}